\newcommand{\scal}[2]{\langle{{#1},{#2}}\rangle}
\newcommand{\RR}{\ensuremath{\mathbb R}}
\newcommand{\RX}{\ensuremath{\,\left]-\infty,+\infty\right]}}
\newcommand{\RXX}{\ensuremath{\,\left[-\infty,+\infty\right]}}
\newcommand{\NN}{\ensuremath{\mathbb N}}
\newcommand{\thalb}{\ensuremath{\tfrac{1}{2}}}
\newcommand{\menge}[2]{\big\{{#1} \mid {#2}\big\}}
\newcommand{\To}{\ensuremath{\rightrightarrows}}
\newcommand{\pos}{\operatorname{pos}}
\newcommand{\dom}{\ensuremath{\operatorname{dom}}}
\newcommand{\gra}{\ensuremath{\operatorname{gra}}}
\newcommand{\inte}{\ensuremath{\operatorname{int}}}
\newcommand{\ran}{\ensuremath{\operatorname{ran}}}
\newcommand{\pinf}{\ensuremath{+\infty}}
\renewcommand{\phi}{\ensuremath{\varphi}}
\newtheorem{theorem}{Theorem}[section]
\newtheorem{fact}[theorem]{Fact}
\newtheorem{openprob}[theorem]{Problem}
\theoremstyle{plain}{\theorembodyfont{\rmfamily}
}
\theoremstyle{plain}{\theorembodyfont{\rmfamily}
}
\theoremstyle{plain}{\theorembodyfont{\rmfamily}
}
\theoremstyle{plain}{\theorembodyfont{\rmfamily}
\newtheorem{example}[theorem]{Example}}
\theoremstyle{plain}{\theorembodyfont{\rmfamily}
\newtheorem{remark}[theorem]{Remark}}
\theoremstyle{plain}{\theorembodyfont{\rmfamily}
}
\begin{document}


\title{\sffamily{Monotone operators and ``bigger conjugate'' functions
 }}

\author{
Heinz H.\ Bauschke\thanks{Mathematics, Irving K.\ Barber School,
University of British Columbia, Kelowna, B.C. V1V 1V7, Canada.
E-mail: \texttt{heinz.bauschke@ubc.ca}.},\; Jonathan M.
Borwein\thanks{CARMA, University of Newcastle, Newcastle, New South
Wales 2308, Australia. E-mail:
\texttt{jonathan.borwein@newcastle.edu.au}.  Distinguished Professor
King Abdulaziz University, Jeddah. },\;
 Xianfu
Wang\thanks{Mathematics, Irving K.\ Barber School, University of British Columbia,
Kelowna, B.C. V1V 1V7, Canada. E-mail:
\texttt{shawn.wang@ubc.ca}.},\;
and Liangjin\
Yao\thanks{Mathematics, Irving K.\ Barber School, University of British Columbia,
Kelowna, B.C. V1V 1V7, Canada.
E-mail:  \texttt{ljinyao@interchange.ubc.ca}.}}

\date{August 12,  2011}
\maketitle

\begin{abstract} \noindent
We  study a
question posed by Stephen Simons in his 2008 monograph involving
``bigger conjugate'' (BC) functions and the partial infimal convolution.
As Simons demonstrated in his monograph, these function have been crucial
to the understanding and advancement of the state-of-the-art of harder problems in
monotone operator theory, especially the sum problem.

In this paper, we provide some tools for further analysis of BC--functions
which allow us to answer Simons' problem in the negative.
We are also able to refute a similar but much harder
conjecture which would have generalized a
classical result of Br\'{e}zis, Crandall and Pazy.
Our work also reinforces the importance of understanding unbounded
skew linear relations to construct monotone operators with unexpected
properties.
\end{abstract}

\noindent {\bfseries 2010 Mathematics Subject Classification:}\\
{Primary  47A06, 47H05;
Secondary
47B65, 47N10,
 90C25}
\noindent

\noindent {\bfseries Keywords:} Adjoint, BC--function,
Fenchel conjugate, Fitzpatrick
function,  linear relation, maximally monotone operator,
monotone operator, multifunction, normal cone operator,
partial infimal convolution.

\noindent

\section{Introduction}

Throughout this paper, we assume that $X$ is a real reflexive Banach space
with norm $\|\cdot\|$, that $X^*$ is the continuous dual of $X$, and
that $X$ and $X^*$ are paired by $\scal{\cdot}{\cdot}$.

 Let $A\colon
X\To X^*$ be a \emph{set-valued operator} (also known as a
multifunction) from $X$ to $X^*$, i.e., for every $x\in X$,
$Ax\subseteq X^*$, and let $\gra A:= \menge{(x,x^*)\in X\times
X^*}{x^*\in Ax}$ be the \emph{graph} of $A$.  The \emph{domain} of
$A$ is $\dom A:= \menge{x\in X}{Ax\neq\varnothing}$,  and $\ran
A:=A(X)$ for the \emph{range} of $A$. Recall that $A$ is
\emph{monotone} if
\begin{equation}
\scal{x-y}{x^*-y^*}\geq 0,\quad \forall (x,x^*)\in \gra A\;
\forall (y,y^*)\in\gra A,
\end{equation}
and \emph{maximally monotone} if $A$ is monotone and $A$ has
 no proper monotone extension
(in the sense of graph inclusion).
Let $S\subseteq X\times X^*$. We say $S$ is \emph{ a monotone set} if there exists a monotone
operator $A:X\rightrightarrows X^*$ such that $\gra A=S$, and $S$ is \emph{a maximally monotone set}
if there exists a maximally monotone
operator $A$ such that $\gra A=S$.
Let $A:X\rightrightarrows X^*$ be monotone and $(x,x^*)\in X\times X^*$.
 We say $(x,x^*)$ is \emph{monotonically related to}
$\gra A$ if
\begin{align*}
\langle x-y,x^*-y^*\rangle\geq0,\quad \forall (y,y^*)\in\gra
A.\end{align*}

 Maximally monotone operators have proven to be a potent class of
objects in modern Optimization and Analysis; see, e.g.,
\cite{Bor1,Bor2,Bor3}, the books \cite{BC2011,
BorVan,BurIus,ph,Si,Si2,RockWets,Zalinescu} and the references
therein.

We adopt standard notation used in these books especially
\cite[Chapter 2]{BorVan} and \cite{Bor1, Si, Si2}: Given a subset
$C$ of $X$,
$\inte C$ is the \emph{interior} of $C$,
$\overline{C}$ is the \emph{norm closure} of $C$. The \emph{support function} of $C$, written as $\sigma_C$,
 is defined by $\sigma_C(x^*):=\sup_{c\in C}\langle c,x^*\rangle$.
The \emph{indicator function} of $C$, written as $\iota_C$, is defined
at $x\in X$ by
\begin{align}
\iota_C (x):=\begin{cases}0,\,&\text{if $x\in C$;}\\
+\infty,\,&\text{otherwise}.\end{cases}\end{align}
For every $x\in X$, the \emph{normal cone operator} of $C$ at $x$
is defined by $N_C(x)= \menge{x^*\in
X^*}{\sup_{c\in C}\scal{c-x}{x^*}\leq 0}$, if $x\in C$; and $N_C(x)=\varnothing$,
if $x\notin C$. For $x,y\in X$, we set $\left[x,y\right]=\{tx+(1-t)y\mid 0\leq t\leq 1\}$.
 The \emph{closed unit
ball} is $B_X:=\menge{x\in X}{\|x\|\leq 1}$, and
$\NN:=\{1,2,3,\ldots\}$.

If $Z$ is a real  Banach space with dual $Z^*$ and a set $S\subseteq
Z$, we denote $S^\bot$ by $S^\bot := \{z^*\in Z^*\mid\langle
z^*,s\rangle= 0,\quad \forall s\in S\}$.
The \emph{adjoint} of an operator  $A$, written $A^*$, is defined by
\begin{equation*}
\gra A^* :=
\menge{(x,x^*)\in X\times X^*}{(x^*,-x)\in(\gra A)^{\bot}}.
\end{equation*}
We say $A$ is a \emph{linear relation} if $\gra A$ is a linear subspace.
 We say that $A$ is
\emph{skew}  if $\gra A \subseteq \gra (-A^*)$;
equivalently, if $\langle x,x^*\rangle=0,\; \forall (x,x^*)\in\gra A$.
Furthermore,
$A$ is \emph{symmetric} if $\gra A
\subseteq\gra A^*$; equivalently, if $\scal{x}{y^*}=\scal{y}{x^*}$,
$\forall (x,x^*),(y,y^*)\in\gra A$.

 Let $f\colon X\to \RX$. Then
$\dom f:= f^{-1}(\RR)$ is the \emph{domain} of $f$, and $f^*\colon
X^*\to\RXX\colon x^*\mapsto \sup_{x\in X}(\scal{x}{x^*}-f(x))$ is
the \emph{Fenchel conjugate} of $f$.  We say $f$ is proper if $\dom f\neq\varnothing$.
Let $f$ be proper. The \emph{subdifferential} of
$f$ is defined by
   $$\partial f\colon X\To X^*\colon
   x\mapsto \{x^*\in X^*\mid(\forall y\in
X)\; \scal{y-x}{x^*} + f(x)\leq f(y)\}.$$

\section{BC-functions}\label{sec:bc}

We now turn to the objects of the present paper: \emph{representative} and\emph{ BC-functions}.
Let $F:X\times X^*\rightarrow\RX$, and define $\pos F$ \cite{Si2}
 by  $$\pos F:=\big\{(x,x^*)\in X\times X^*\mid F(x,x^*)=\langle x,x^*\rangle\big\}.$$
 We say $F$ is a \emph{BC--function} (BC stands for
``bigger conjugate'') \cite{Si2} if $F$ is proper and
convex with
\begin{align} F^*(x^*,x)
 \geq F(x,x^*)\geq\langle x,x^*\rangle,\quad\forall(x,x^*)\in X\times X^*.
 \end{align}
The prototype for a BC function is the Fitzpatrick function \cite{Fitz88,Si2,BorVan}.

Let now $Y$ be another real Banach space. We set  $P_X: X\times Y\rightarrow
X\colon (x,y)\mapsto x$.
Let $F_1, F_2\colon X\times Y\rightarrow\RX$.
Then the \emph{partial inf-convolution}  $F_1\Box_2 F_2$
is the function defined on $X\times Y$ by
\begin{equation*}F_1\Box_2 F_2\colon
(x,y)\mapsto \inf_{v\in Y}
F_1(x,y-v)+F_2(x,v).
\end{equation*}

The importance of BC-functions associated with monotone operators
 is that along with appropriate partial convolutions, they provide
 the most powerful current method to establish the maximality of the
  sum of two maximally monotone operators \cite{Si2,BorVan}.
The two problems considered below are closely related to constructions
of maximally monotone operators as sums (see also Remark~\ref{r:final}).

The following question  was posed by S.\ Simons
\cite[Problem~34.7]{Si2}:

\begin{openprob}[Simons] \label{prob1}\emph{Let $F_1,F_2:X\times X^*\rightarrow\RX$ be
 proper lower semicontinuous and convex functions with $P_X\dom F_1\cap P_X\dom F_2\neq\varnothing$.
Assume that $F_1, F_2$ are BC--functions and  that there exists
an increasing function $j:\left[0,+\infty\right[\rightarrow
\left[0,+\infty\right[$ such that the implication
\begin{align*}&(x,x^*)\in\pos F_1, (y,y^*)\in\pos F_2, x\neq y\ \text{and}\
\langle x-y, y^*\rangle=\|x-y\|\cdot\|y^*\|\\
&\quad\Rightarrow\|y^*\|\leq
j\big(\|x\|+\|x^*+y^*\|+\|y\|+\|x-y\|\cdot\|y^*\|\big)
\end{align*}
holds.  Then, is it true that, for all $(z,z^*)\in X\times X^*$, there exists
$x^*\in X^*$ such that
\begin{align*}
F_1^*(x^*,z)+F_2^*(z^*-x^*,z)\leq (F_1\Box_2 F_2)^*(x^*,z)?
\end{align*}}
\end{openprob}

In Example~\ref{ex:donner} of  this paper,
we construct  a comprehensive negative answer to Problem~\ref{prob1}.
This in turn prompts another question:

\begin{openprob}\label{prob2} \emph{Let $F_1,F_2:X\times X^*\rightarrow\RX$ be
 proper lower semicontinuous and convex functions
  with $P_X\dom F_1\cap P_X\dom F_2\neq\varnothing$.
Assume that $F_1, F_2$ are BC--functions and  that there exists
an increasing function $j:\left[0,+\infty\right[\rightarrow
\left[0,+\infty\right[$ such that the implication
\begin{align*}&(x,x^*)\in\pos F_1, (y,y^*)\in\pos F_2, x\neq y\ \text{and}\
\langle x-y, y^*\rangle=\|x-y\|\cdot\|y^*\|\\
&\quad\Rightarrow\|y^*\|\leq
j\big(\|x\|+\|x^*+y^*\|+\|y\|+\|x-y\|\cdot\|y^*\|\big)
\end{align*}
holds.  Then, is it true that, for all $(z,z^*)\in X\times X^*$, there exists
$v^*\in X^*$ such that
\begin{align}
\label{Probcon}F_1^*(v^*,z)+F_2^*(z^*-v^*,z)\leq (F_1\Box_2 F_2)^*(z^*,z)?
\end{align}}
\end{openprob}

This is a quite reasonable question and somewhat harder to answer. An affirmative response to Problem \ref{prob2}
 would rederive Simons' theorem
(Fact~\ref{BCPCon}). Precisely, when the latter conjecture holds,
  we can deduce that $F:=F_1\Box_2 F_2$ is a BC-function.
It follows that   $\pos F$ (i.e., $M$ in Fact~\ref{BCPCon}) is
 a maximally monotone set; by Simons' result \cite[Theorem~21.4]{Si2}.
However, Example~\ref{FCPEX:3} shows that the  conjecture fails in
general.

We are now ready to set to work. The remainder of the paper is organized as follows.
In Section~\ref{s:aux}, we collect auxiliary results for future
reference and for the reader's convenience. Our main result
(Theorem~\ref{BCPON:2}) is established in Section~\ref{s:main}. In
Example~\ref{ex:donner},
we provide the promised negative answer to Problem~\ref{prob1}.
In Section~\ref{sec:4}, we
provide a negative answer to Problem~\ref{prob2}.

 \section{Auxiliary results}\label{s:aux}
\begin{fact}[Rockafellar]\label{SubMR}\emph{(See \cite[Theorem~A]{Rock702},
 \cite[Theorem~3.2.8]{Zalinescu}, \cite[Theorem~18.7]{Si2} or \cite[Theorem~2.1]{MSV})}
 Let $f:X\rightarrow\RX$ be a proper lower semicontinuous convex function.
Then $\partial f$ is maximally monotone.
\end{fact}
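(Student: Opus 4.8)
The plan is to prove the two halves---monotonicity and maximality---separately, and to carry out the maximality half by a perturbation-plus-variational-principle argument that never uses reflexivity of $X$. Even though the ambient space of this paper is reflexive, the statement as cited is Rockafellar's \emph{general} Banach-space theorem, so I keep the argument reflexivity-free. Monotonicity is immediate: for $(x,x^*),(y,y^*)\in\gra\partial f$, adding the subgradient inequalities $f(y)\ge f(x)+\scal{y-x}{x^*}$ and $f(x)\ge f(y)+\scal{x-y}{y^*}$ yields $\scal{x-y}{x^*-y^*}\ge 0$. For maximality, let $(x_0,x_0^*)$ be monotonically related to $\gra\partial f$ and translate: set $g(x):=f(x+x_0)-\scal{x}{x_0^*}$, so that $g$ is proper, lower semicontinuous and convex, $\partial g(x)=\partial f(x+x_0)-x_0^*$, the relatedness hypothesis becomes $\scal{z}{z^*}\ge 0$ for all $(z,z^*)\in\gra\partial g$, and the goal $x_0^*\in\partial f(x_0)$ becomes $0\in\partial g(0)$.

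Next I introduce the Moreau-type perturbation $F:=g+\tfrac12\|\cdot\|^2$. Since a proper lower semicontinuous convex $g$ is minorized by a continuous affine functional, $F$ is proper, lower semicontinuous, convex and bounded below (indeed coercive), so $\mu:=\inf F$ is finite. Because $\tfrac12\|\cdot\|^2$ is everywhere continuous, the Moreau--Rockafellar sum rule gives $\partial F=\partial g+J$, where $J:=\partial(\tfrac12\|\cdot\|^2)$ is the normalized duality map, so every $j\in J(z)$ satisfies $\scal{z}{j}=\|z\|^2$ and $\|j\|=\|z\|$. In a Hilbert (or suitably renormed reflexive) space one would simply take the minimizer $\bar x$ of $F$, obtain $-\bar x\in\partial g(\bar x)$ with $\scal{\bar x}{-\bar x}=-\|\bar x\|^2\le 0$, and contradict the hypothesis as soon as $\bar x\ne 0$. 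The entire difficulty is to run this with \emph{no} minimizer and \emph{no} single-valued duality map.

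To do so I replace the exact minimizer by approximate minimizers from Ekeland's variational principle and correct them with the Br\o{}ndsted--Rockafellar theorem, both valid in any Banach space \cite{ph}. Fix $\varepsilon_n\downarrow 0$ and choose $x_n$ with $F(x_n)\le\mu+\varepsilon_n^2$; then $0\in\partial_{\varepsilon_n^2}F(x_n)$, and Br\o{}ndsted--Rockafellar produces $(z_n,w_n^*)\in\gra\partial F$ with $\|z_n-x_n\|\le\varepsilon_n$ and $\|w_n^*\|\le\varepsilon_n$. Writing $w_n^*=z_n^*+j_n$ with $z_n^*\in\partial g(z_n)$ and $j_n\in J(z_n)$, I compute
\[
\scal{z_n}{z_n^*}=\scal{z_n}{w_n^*}-\scal{z_n}{j_n}\le\|z_n\|\,\varepsilon_n-\|z_n\|^2=\|z_n\|\big(\varepsilon_n-\|z_n\|\big).
\]
A dichotomy on the sublevel sets $\{F\le\mu+t\}$ then finishes the proof: either their distance to $0$ stays bounded below as $t\downarrow 0$, in which case $\|z_n\|$ is bounded away from $0$ and the displayed estimate forces $\scal{z_n}{z_n^*}<0$ for large $n$, contradicting $\scal{z}{z^*}\ge0$ on $\gra\partial g$; or there is a minimizing sequence tending to $0$, whence lower semicontinuity of $g$ (equivalently of $F$) at $0$ gives $F(0)=\mu$, so $0$ minimizes $F$ and $0\in\partial F(0)=\partial g(0)$ using $J(0)=\{0\}$. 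Since the hypothesis excludes the first alternative, the second holds, i.e.\ $0\in\partial g(0)$, which unwinds to $x_0^*\in\partial f(x_0)$.

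The main obstacle is precisely this reflexivity-free realization: controlling the perturbed \emph{exact} subgradients $z_n^*\in\partial g(z_n)$ quantitatively while guaranteeing that $\|z_n\|$ is bounded below, so that the sign of $\scal{z_n}{z_n^*}$ is genuinely negative. Balancing the two Br\o{}ndsted--Rockafellar tolerances (here both $O(\varepsilon_n)$) against the coercivity-driven lower bound on $\|z_n\|$ is the delicate quantitative step, and the lower semicontinuity of $g$ at $0$ is exactly what makes the alternative case collapse to the desired conclusion rather than to a spurious near-$0$ minimizer.
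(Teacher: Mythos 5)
Your proof is correct, but there is nothing in the paper to compare it against: the paper states this result as a quoted Fact, with pointers to Rockafellar, Z\u{a}linescu, Simons, and Marques Alves--Svaiter, and supplies no proof of its own. What you have written is essentially the classical Br{\o}ndsted--Rockafellar route (Rockafellar's original argument for his Theorem~A, as presented e.g.\ in Phelps), rather than the Fenchel-duality proofs of Simons or Marques Alves--Svaiter among the cited sources. All the steps check out: the translation correctly reduces maximality to showing $0\in\partial g(0)$ under the hypothesis $\scal{z}{z^*}\geq 0$ on $\gra \partial g$; the Moreau--Rockafellar sum rule applies because $\tfrac12\|\cdot\\|^2$ is everywhere continuous; the Br{\o}ndsted--Rockafellar tolerances $(\varepsilon_n^2,\varepsilon_n)$ are balanced correctly; and the duality-map identities $\scal{z}{j}=\|z\|^2$ and $J(0)=\{0\}$ are standard. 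Two remarks. First, invoking Ekeland's variational principle is a red herring: any $x_n$ with $F(x_n)\leq\mu+\varepsilon_n^2$ already satisfies $0\in\partial_{\varepsilon_n^2}F(x_n)$, which is all the Br{\o}ndsted--Rockafellar theorem needs as input (Ekeland is hidden inside that theorem, not needed on top of it). Second, your dichotomy on the sublevel sets $\{F\leq\mu+t\}$, while valid as you run it, is more elaborate than necessary: combining the hypothesis $\scal{z_n}{z_n^*}\geq 0$ with your displayed estimate $\scal{z_n}{z_n^*}\leq\|z_n\|\bigl(\varepsilon_n-\|z_n\|\bigr)$ directly forces $\|z_n\|\leq\varepsilon_n$, hence $\|x_n\|\leq 2\varepsilon_n$, so $(x_n)$ is a minimizing sequence tending to $0$ and lower semicontinuity gives $F(0)=\mu$ at once; the first horn of the dichotomy never needs to be formulated, since the hypothesis converts your estimate into the bound $\|z_n\|\leq\varepsilon_n$ globally. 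Finally, your insistence on a reflexivity-free argument buys the full generality of the cited theorem; in the paper's reflexive setting one could shortcut by renorming (Troyanski) so that norm and dual norm are strictly convex and smooth, taking an exact minimizer of $F$, and using the single-valued duality map---but your version is the one matching the statement as cited.
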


We now turn to  prerequisite results on Fitzpatrick functions,
monotone operators, and  linear relations.

\begin{fact}[Fitzpatrick]
\emph{(See {\cite[Corollary~3.9 and Proposition~4.2]{Fitz88}} and
\cite{Bor1,BorVan}.)} \label{f:Fitz} Let $A\colon X\To X^*$ be
maximally monotone, and set
\begin{equation}
F_A\colon X\times X^*\to\RX\colon
(x,x^*)\mapsto \sup_{(a,a^*)\in\gra A}
\big(\scal{x}{a^*}+\scal{a}{x^*}-\scal{a}{a^*}\big),
\end{equation}
which is the \emph{Fitzpatrick function} associated with $A$.
Then $F_A$ is a BC--function and $\pos F_A=\gra A$.
\end{fact}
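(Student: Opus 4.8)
The plan is to prove the three assertions in turn: that $F_A$ is proper and convex, that it satisfies the BC chain of inequalities, and that $\pos F_A=\gra A$. The engine behind everything is the elementary identity, valid for all $(x,x^*),(a,a^*)\in X\times X^*$,
\begin{equation*}
\scal{x}{a^*}+\scal{a}{x^*}-\scal{a}{a^*}=\scal{x}{x^*}-\scal{x-a}{x^*-a^*},
\end{equation*}
which rewrites each term of the supremum defining $F_A$ as $\scal{x}{x^*}$ minus a monotonicity pairing. Convexity and lower semicontinuity are then immediate: for each fixed $(a,a^*)\in\gra A$ the map $(x,x^*)\mapsto\scal{x}{a^*}+\scal{a}{x^*}-\scal{a}{a^*}$ is continuous and affine, and $F_A$ is their pointwise supremum. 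Properness will fall out of the next step.

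Second, I would dispatch the ``easy half,'' namely the inclusion $\gra A\subseteq\pos F_A$ and the conjugate inequality, using \emph{only} monotonicity. For $(a,a^*)\in\gra A$, the displayed identity shows that the running term of $F_A(a,a^*)$ equals $\scal{a}{a^*}-\scal{a-b}{a^*-b^*}$ for the index $(b,b^*)\in\gra A$, which is $\le\scal{a}{a^*}$ by monotonicity and attains $\scal{a}{a^*}$ at $(b,b^*)=(a,a^*)$; hence $F_A(a,a^*)=\scal{a}{a^*}$. This gives $\gra A\subseteq\pos F_A$ and, since $\gra A\neq\emp$, properness. For the conjugate bound I would \emph{restrict the supremum defining $F_A^*$ to $\gra A$}, where $F_A$ coincides with the duality pairing:
\begin{equation*}
F_A^*(x^*,x)\ge\sup_{(a,a^*)\in\gra A}\big(\scal{a}{x^*}+\scal{x}{a^*}-\scal{a}{a^*}\big)=F_A(x,x^*).
\end{equation*}

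Third, I would prove $F_A(x,x^*)\ge\scal{x}{x^*}$ for \emph{all} $(x,x^*)$ together with the reverse inclusion $\pos F_A\subseteq\gra A$; this is where maximality enters. If $(x,x^*)\in\gra A$ the bound is the equality already shown. If $(x,x^*)\notin\gra A$, maximality forbids $(x,x^*)$ from being monotonically related to $\gra A$, so there exists $(a,a^*)\in\gra A$ with $\scal{x-a}{x^*-a^*}<0$; feeding this index into the defining supremum and invoking the identity yields $F_A(x,x^*)\ge\scal{x}{x^*}-\scal{x-a}{x^*-a^*}>\scal{x}{x^*}$. Thus the minorization holds everywhere, and equality $F_A(x,x^*)=\scal{x}{x^*}$ forces $(x,x^*)\in\gra A$, giving $\pos F_A\subseteq\gra A$. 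Assembling the pieces yields $\pos F_A=\gra A$ and the BC inequalities $F_A^*(x^*,x)\ge F_A(x,x^*)\ge\scal{x}{x^*}$.

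The real obstacle is not any estimate but pinpointing where maximality is indispensable: the conjugate inequality and the inclusion $\gra A\subseteq\pos F_A$ need only monotonicity, whereas the global minorization $F_A\ge\scal{\cdot}{\cdot}$ and the inclusion $\pos F_A\subseteq\gra A$ genuinely fail for merely monotone $A$ and must route through the implication ``not monotonically related $\Rightarrow$ there is a witness $(a,a^*)$ with negative pairing.'' Getting that logical step clean is the crux of the argument.
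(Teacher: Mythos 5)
Your proof is correct. Note that the paper itself offers no proof of this statement: it is recorded as a Fact with citations to Fitzpatrick's original article and to \cite{Bor1,BorVan}, so there is nothing internal to compare against. Your argument is the standard one from those sources: the identity $\scal{x}{a^*}+\scal{a}{x^*}-\scal{a}{a^*}=\scal{x}{x^*}-\scal{x-a}{x^*-a^*}$ drives everything, the supremum of continuous affine functions gives convexity and properness, restricting the supremum in $F_A^*(x^*,x)$ to $\gra A$ gives the conjugate inequality, and you correctly isolate the one place maximality is needed --- producing a witness $(a,a^*)\in\gra A$ with $\scal{x-a}{x^*-a^*}<0$ whenever $(x,x^*)\notin\gra A$, which yields both $F_A\geq\scal{\cdot}{\cdot}$ globally and $\pos F_A\subseteq\gra A$.
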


\begin{fact}[Simons and Z\u{a}linescu]
\emph{(See \cite[Theorem~4.2]{SiZ} or \cite[Theorem~16.4(a)]{Si2}.)}\label{F4}
Let $Y$ be a real Banach space and $F_1, F_2\colon X\times Y \to \RX$ be proper,
lower semicontinuous, and convex. Assume that
for every $(x,y)\in X\times Y$,
\begin{equation*}(F_1\Box_2 F_2)(x,y)>-\infty
\end{equation*}
and that  $\bigcup_{\lambda>0} \lambda\left[P_X\dom F_1-P_X\dom F_2\right]$
is a closed subspace of $X$. Then for every $(x^*,y^*)\in X^*\times Y^*$,
\begin{equation*}
(F_1\Box_2 F_2)^*(x^*,y^*)=\min_{u^*\in X^*}
\left[F_1^*(x^*-u^*,y^*)+F_2^*(u^*,y^*)\right].
\end{equation*}\end{fact}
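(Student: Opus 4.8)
The plan is to recognize this identity as an instance of Fenchel--Rockafellar duality for the conjugate of a convex function composed with a bounded linear map, and to check that the two standing hypotheses are exactly the properness and constraint-qualification requirements that make that duality formula hold with attainment. First I would unfold the left-hand side: writing out the conjugate, substituting $y=w+v$ in the inner infimum, and using $-\inf=\sup(-\,\cdot\,)$ gives, with no hypotheses at all,
\begin{equation*}
(F_1\Box_2 F_2)^*(x^*,y^*)=\sup_{(x,w,v)\in X\times Y\times Y}\big[\scal{x}{x^*}+\scal{w}{y^*}+\scal{v}{y^*}-F_1(x,w)-F_2(x,v)\big].
\end{equation*}
A one-line weak-duality check (restrict the two conjugates on the right of the claim to a common $x$ and split $x^*=(x^*-u^*)+u^*$) shows $F_1^*(x^*-u^*,y^*)+F_2^*(u^*,y^*)\ge(F_1\Box_2 F_2)^*(x^*,y^*)$ for every $u^*$, so the whole content of the statement is the reverse inequality together with the attainment of the infimum.

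Second, I would package the displayed supremum as a single conjugate. On $W:=(X\times Y)\times(X\times Y)$ put $\Phi\big((x_1,w),(x_2,v)\big):=F_1(x_1,w)+F_2(x_2,v)$; this is proper, lower semicontinuous and convex with separable conjugate $\Phi^*\big((u_1^*,y_1^*),(u_2^*,y_2^*)\big)=F_1^*(u_1^*,y_1^*)+F_2^*(u_2^*,y_2^*)$. Let $L\colon X\times Y\times Y\to W$, $L(x,w,v):=\big((x,w),(x,v)\big)$, be the bounded linear ``diagonal in $X$'' map, whose adjoint is $L^*\big((u_1^*,y_1^*),(u_2^*,y_2^*)\big)=(u_1^*+u_2^*,y_1^*,y_2^*)$. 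The display then reads precisely $(F_1\Box_2 F_2)^*(x^*,y^*)=(\Phi\circ L)^*(x^*,y^*,y^*)$.

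Third---the crux---I would invoke the conjugate-of-composition theorem in its Attouch--Brezis form: if $\Phi$ is proper lower semicontinuous convex, $L$ is bounded linear with $\ran L\cap\dom\Phi\ne\varnothing$, and $\bigcup_{\lambda>0}\lambda(\dom\Phi-\ran L)$ is a closed subspace of $W$, then $(\Phi\circ L)^*(\zeta^*)=\min\big\{\Phi^*(\eta^*)\mid L^*\eta^*=\zeta^*\big\}$. Solving $L^*\eta^*=(x^*,y^*,y^*)$ forces $y_1^*=y_2^*=y^*$ and $u_1^*+u_2^*=x^*$, so setting $u^*:=u_2^*$ turns the right-hand minimum into $\min_{u^*}\big[F_1^*(x^*-u^*,y^*)+F_2^*(u^*,y^*)\big]$, which is the claim. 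It remains to translate the two assumptions: $\ran L\cap\dom\Phi\ne\varnothing$ is equivalent to $P_X\dom F_1\cap P_X\dom F_2\ne\varnothing$, and a direct computation gives $\dom\Phi-\ran L=\{(s,t)\mid s-t\in P_X\dom F_1-P_X\dom F_2\}\times(Y\times Y)$ (the two $Y$-slots are unconstrained because the subtracted copies range over all of $Y$), whence $\bigcup_{\lambda>0}\lambda(\dom\Phi-\ran L)$ is a closed subspace if and only if $\bigcup_{\lambda>0}\lambda(P_X\dom F_1-P_X\dom F_2)$ is; this is exactly the hypothesis. The assumption $(F_1\Box_2 F_2)>-\infty$ serves to keep the relevant functions proper so that the conjugates are nondegenerate and the duality applies.

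I expect the only real obstacle to be in the third step, namely establishing the no-gap-plus-attainment composition formula under the \emph{closed-subspace} condition rather than the classical interior-point (Rockafellar) condition; this strengthening is precisely what the Attouch--Brezis theorem supplies, and everything else is bookkeeping. An equivalent but slightly more delicate route would avoid the linear map: set $h_i(x):=\inf_{w}\big[F_i(x,w)-\scal{w}{y^*}\big]$ (partial conjugation in $Y$), note that $h_i^*=F_i^*(\cdot,y^*)$ and reduce to the sum rule $(h_1+h_2)^*=h_1^*\Box h_2^*$; the drawback is that the marginal functions $h_i$ need not be lower semicontinuous, a nuisance one must then manage but which the composition formulation sidesteps entirely.
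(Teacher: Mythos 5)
This statement is quoted in the paper as a Fact imported from Simons--Z\u{a}linescu \cite[Theorem~4.2]{SiZ} (see also \cite[Theorem~16.4(a)]{Si2}); the paper supplies no proof of its own, so there is nothing internal to compare against. Your argument is correct and is essentially the standard derivation of this result: the unfolding of $(F_1\Box_2F_2)^*$ into a triple supremum, the weak-duality inequality, the encoding as $(\Phi\circ L)^*(x^*,y^*,y^*)$ with the diagonal-in-$X$ map $L$, the computation of $L^*$ and of $\dom\Phi-\ran L$, and the appeal to the Attouch--Brezis-type composition formula (e.g.\ \cite[Theorem~2.8.3]{Zalinescu}) are all sound, and the translation of the closed-subspace hypothesis is exactly right. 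Two small remarks. First, you list $\ran L\cap\dom\Phi\neq\varnothing$ as a condition to be ``translated'', but it is automatic: a closed subspace contains $0$, so the hypothesis that $\bigcup_{\lambda>0}\lambda\left[P_X\dom F_1-P_X\dom F_2\right]$ is a closed subspace already forces $P_X\dom F_1\cap P_X\dom F_2\neq\varnothing$. Second, your reading of the hypothesis $(F_1\Box_2F_2)>-\infty$ is slightly off: on your route it is not actually needed for the conjugate identity (the identity $(F_1\Box_2F_2)^*=(\Phi\circ L)^*(\cdot,\cdot,\cdot)$ holds unconditionally, and $\Phi\circ L$ is proper once $\ran L\cap\dom\Phi\neq\varnothing$); it appears in the original statement because Simons and Z\u{a}linescu simultaneously assert that $F_1\Box_2F_2$ is itself a proper convex function. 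If you wish to avoid invoking the composition form of Attouch--Brezis, note that setting $g_1(x,w,v):=F_1(x,w)$ and $g_2(x,w,v):=F_2(x,v)$ on $X\times Y\times Y$ gives $g_1+g_2=\Phi\circ L$ with $\dom g_1-\dom g_2$ again a product having full $Y\times Y$ factor, so the plain Attouch--Brezis sum formula $(g_1+g_2)^*=g_1^*\,\infconv\, g_2^*$ (exact) yields the same conclusion after evaluating $g_1^*$ and $g_2^*$, which contain the indicator $\iota_{\{0\}}$ in the unused dual $Y$-slot.
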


The following Simons' result generalizes
 the result of Br\'{e}zis, Crandall and Pazy \cite{BreCA}.
\begin{fact}[Simons]
\emph{(See \cite[Theorem~34.3]{Si2}.)}\label{BCPCon}
Let $F_1,F_2:X\times X^*\rightarrow\RX$ be
 proper lower semicontinuous and convex functions
  with $P_X\dom F_1\cap P_X\dom F_2\neq\varnothing$.
Assume that $F_1, F_2$ are BC--functions and  that there exists
an increasing function $j:\left[0,+\infty\right[\rightarrow
\left[0,+\infty\right[$ such that the implication
\begin{align*}&(x,x^*)\in\pos F_1, (y,y^*)\in\pos F_2, x\neq y\ \text{and}\
\langle x-y, y^*\rangle=\|x-y\|\cdot\|y^*\|\\
&\quad\Rightarrow\|y^*\|\leq
j\big(\|x\|+\|x^*+y^*\|+\|y\|+\|x-y\|\cdot\|y^*\|\big)
\end{align*}
holds. Then
$M:=\big\{(x, x^*+y^*)\mid (x,x^*)\in\pos F_1, (x,y^*)\in\pos F_2\big\}$
is a maximally monotone set.
\end{fact}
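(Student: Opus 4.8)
The plan is to prove that the single function $F:=F_1\Box_2 F_2$ is itself a BC--function whose position set $\pos F$ coincides with $M$, and then to conclude via the principle---recalled in the discussion preceding this statement---that the position set of a BC--function is maximally monotone. Two of the ingredients are essentially free. \emph{Monotonicity} of $\pos F$ (hence of the subset $M$) needs only convexity of $F$ together with $F\ge\langle\cdot,\cdot\rangle$: given $(a,a^*),(b,b^*)\in\pos F$, comparing the convexity estimate $F\big(\tfrac{a+b}2,\tfrac{a^*+b^*}2\big)\le\tfrac12\langle a,a^*\rangle+\tfrac12\langle b,b^*\rangle$ with the lower bound $F\big(\tfrac{a+b}2,\tfrac{a^*+b^*}2\big)\ge\big\langle\tfrac{a+b}2,\tfrac{a^*+b^*}2\big\rangle$ yields $\langle a-b,a^*-b^*\rangle\ge0$. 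The inclusion $M\subseteq\pos F$ is equally direct: if $w^*=u^*+v^*$ with $(x,u^*)\in\pos F_1$ and $(x,v^*)\in\pos F_2$, then $F(x,w^*)\le F_1(x,u^*)+F_2(x,v^*)=\langle x,u^*\rangle+\langle x,v^*\rangle=\langle x,w^*\rangle$, while $F\ge\langle\cdot,\cdot\rangle$ forces equality.

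Next I would verify that $F$ is a BC--function. Convexity of a partial infimal convolution is standard, and properness is immediate: the same pointwise estimate $F_1(x,x^*-v^*)+F_2(x,v^*)\ge\langle x,x^*-v^*\rangle+\langle x,v^*\rangle=\langle x,x^*\rangle$ shows $F\ge\langle\cdot,\cdot\rangle>-\infty$, while $P_X\dom F_1\cap P_X\dom F_2\ne\varnothing$ makes $F$ finite somewhere. The one genuinely analytic point is the conjugate inequality $F^*(x^*,x)\ge F(x,x^*)$. To obtain it I would appeal to Fact~\ref{F4} (with $Y=X^*$, so that $Y^*=X$ by reflexivity), which expresses $F^*(x^*,x)=\min_{u^*}\big[F_1^*(x^*-u^*,x)+F_2^*(u^*,x)\big]$; granting this identity, the individual BC inequalities $F_1^*(x^*-u^*,x)\ge F_1(x,x^*-u^*)$ and $F_2^*(u^*,x)\ge F_2(x,u^*)$ applied to the attaining $u^*$, followed by a bound below by the infimal convolution, deliver precisely $F^*(x^*,x)\ge F(x,x^*)$. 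The subtlety is that the identity of Fact~\ref{F4} rests on a closed--subspace condition not furnished by the bare hypothesis $P_X\dom F_1\cap P_X\dom F_2\ne\varnothing$, so this step, like the maximality below, is where the growth condition must do its work.

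With $F$ a BC--function and $M\subseteq\pos F$, it remains to upgrade this to $\pos F=M$ and, above all, to maximality. For the reverse inclusion one takes a minimizing sequence $v_n^*$ for $F(x,w^*)=\langle x,w^*\rangle$; the two summands are then asymptotically tight against their duality products, so any weak cluster point of $(v_n^*)$ exhibits a decomposition certifying $(x,w^*)\in M$. Maximality is the heart of the matter: given $(z,z^*)$ monotonically related to $M$, one must manufacture a decomposition $z^*=u^*+v^*$ with $(z,u^*)\in\pos F_1$ and $(z,v^*)\in\pos F_2$, which is achieved by a Fenchel/minimax argument comparing $(z,z^*)$ against points $(x,u^*)\in\pos F_1$ and $(y,v^*)\in\pos F_2$ drawn from the two position sets.

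The main obstacle is precisely the attainment underlying both the reverse inclusion and the maximality step: a partial infimal convolution of lower semicontinuous functions need not be lower semicontinuous, and in the absence of an interiority constraint qualification the required infima need not be attained. This is exactly the role of the hypothesis on $j$. The bound $\|y^*\|\le j\big(\|x\|+\|x^*+y^*\|+\|y\|+\|x-y\|\cdot\|y^*\|\big)$, triggered when a point of $\pos F_1$ and a point of $\pos F_2$ sit at distinct base points $x\ne y$ in the extremal configuration $\langle x-y,y^*\rangle=\|x-y\|\cdot\|y^*\|$, is a coercivity estimate forcing the relevant minimizing sequences to be bounded; reflexivity then supplies a weakly convergent subsequence, and weak lower semicontinuity of $F_1$ and $F_2$ closes the argument. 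Verifying that $j$ indeed delivers this boundedness in the minimax step, thereby substituting for the missing constraint qualification, is where the real work lies.
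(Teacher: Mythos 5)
The paper offers no proof of this Fact; it is imported verbatim from Simons (\cite[Theorem~34.3]{Si2}), so the only comparison available is with the paper's surrounding discussion --- and that discussion refutes your strategy. You propose to show that $F:=F_1\Box_2 F_2$ is a BC--function with $\pos F=M$ and then invoke \cite[Theorem~21.4]{Si2}. The elementary pieces are fine ($F\ge\scal{\cdot}{\cdot}$, monotonicity of $\pos F$, and $M\subseteq\pos F$ all go through as you write them), but the step you defer to the end --- establishing $F^*(x^*,x)\ge F(x,x^*)$ by forcing the conjugate decomposition of Fact~\ref{F4} through without its closed--subspace constraint qualification, with the growth condition on $j$ ``doing the work'' --- is exactly Problem~\ref{prob2} of this paper, and Section~\ref{sec:4} answers it in the negative.

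Concretely, Example~\ref{FCPEX:3} supplies $F_1=F_{N_C}$ and $F_2=F_{A^*}$ satisfying every hypothesis of Fact~\ref{BCPCon} (the $j$--implication holds for any increasing $j$ with $j(\gamma)\ge\gamma/2$), yet $(F_{A^*}\Box_2 F_{N_C})^*(A^*e_1,0)\le\tfrac18$ while $(F_{A^*}\Box_2 F_{N_C})(0,A^*e_1)=\sigma_C(A^*e_1)=\tfrac12$. Hence the inequality $F^*(x^*,x)\ge F(x,x^*)$ fails at $(x,x^*)=(0,A^*e_1)$: the partial infimal convolution is \emph{not} a BC--function under the stated hypotheses, even though $M=\gra(A^*+N_C)$ is still maximally monotone by the final assertion of that example. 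So the coercivity encoded in $j$ cannot substitute for the missing constraint qualification in Fact~\ref{F4}, and no refinement of your minimizing--sequence/weak--compactness argument can close the gap, because the intermediate statement you are aiming for is simply false. A correct proof must establish the maximality of $M$ directly from the two sets $\pos F_1$ and $\pos F_2$ (as Simons does in \cite[Theorem~34.3]{Si2}), bypassing entirely the question of whether $F_1\Box_2 F_2$ is a BC--function.
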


\section{Our main result}\label{s:main}

We start with two technical tools which relate Fitzpatrick
functions and skew operators. We  first give a  direct proof of the following result.

\begin{fact}
\emph{(See \cite[Corollary~5.9]{BBBRW}.)}
\label{f:referee04}
Let $C$ be a nonempty closed convex subset of $X$.
Then $F_{N_C}=\iota_C\oplus \iota^*_C$.
\end{fact}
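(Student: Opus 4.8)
The plan is to evaluate the supremum defining $F_{N_C}$ directly from Fitzpatrick's formula (Fact~\ref{f:Fitz}) and to match it, case by case, against $(\iota_C\oplus\iota_C^*)(x,x^*)=\iota_C(x)+\sigma_C(x^*)$, using that $\iota_C^*=\sigma_C$. The first observation I would record is that whenever $(a,a^*)\in\gra N_C$ one has $a\in C$ and $\scal{c-a}{a^*}\le 0$ for every $c\in C$, so $a$ maximizes $\scal{\cdot}{a^*}$ over $C$ and hence $\scal{a}{a^*}=\sigma_C(a^*)$. Substituting this, the generic term in Fitzpatrick's supremum becomes $\scal{x}{a^*}+\scal{a}{x^*}-\sigma_C(a^*)=\scal{x-a}{a^*}+\scal{a}{x^*}$, which is the form I would exploit throughout.

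For the inequality $F_{N_C}\le\iota_C\oplus\iota_C^*$ the only nontrivial case is $x\in C$, since for $x\notin C$ the right-hand side is $+\infty$. When $x\in C$, the defining property of $a^*\in N_C(a)$ gives $\scal{x-a}{a^*}\le 0$, while $a\in C$ gives $\scal{a}{x^*}\le\sigma_C(x^*)$; adding these shows every admissible term is at most $\sigma_C(x^*)$, whence $F_{N_C}(x,x^*)\le\sigma_C(x^*)=\iota_C(x)+\sigma_C(x^*)$.

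The reverse inequality splits likewise. For $x\in C$ I would simply restrict the supremum to the admissible pairs $(a,0)$ with $a\in C$ (note that $0\in N_C(a)$), obtaining $F_{N_C}(x,x^*)\ge\sup_{a\in C}\scal{a}{x^*}=\sigma_C(x^*)$; combined with the previous paragraph this settles the case $x\in C$. The genuine difficulty, and the main obstacle, is to show that $F_{N_C}(x,x^*)=+\infty$ when $x\notin C$, and this is where the reflexivity of $X$ enters. Since $C$ is a nonempty closed convex subset of a reflexive space it is proximinal, so I may choose a nearest point $\bar a\in C$ to $x$ and set $d:=\di(x,C)=\|x-\bar a\|>0$. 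The open ball $\inte(x+d\,B_X)$ is then disjoint from $C$, and separating these two convex sets produces $a^*\in X^*$ with $\sup_{c\in C}\scal{c}{a^*}\le\scal{x}{a^*}-d\|a^*\|$; a short check (using that $\bar a$ lies on the bounding sphere) shows the separating functional is supported at $\bar a$, i.e.\ $a^*\in N_C(\bar a)$ and $\scal{x-\bar a}{a^*}\ge d\|a^*\|>0$.

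Because $N_C(\bar a)$ is a cone, each pair $(\bar a,t a^*)$ with $t>0$ lies in $\gra N_C$, and the corresponding Fitzpatrick term equals $t\scal{x-\bar a}{a^*}+\scal{\bar a}{x^*}$, which tends to $+\infty$ as $t\to+\infty$. Hence $F_{N_C}(x,x^*)=+\infty=\iota_C(x)+\sigma_C(x^*)$, completing the match. I expect the crux of the argument to be exactly this last case: one must manufacture an element of $\gra N_C$ along which the term is unbounded, and the key device is to scale \emph{within} the normal cone at a fixed base point $\bar a$, so that the potentially troublesome term $\scal{\bar a}{x^*}$ stays bounded while $t\scal{x-\bar a}{a^*}$ drives the expression to infinity.
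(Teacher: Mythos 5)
Your proof is correct, but the decisive step --- showing $F_{N_C}(x,x^*)=+\infty$ when $x\notin C$ --- is handled by a genuinely different device than in the paper. Both arguments agree on the easy parts for $x\in C$ (each term of Fitzpatrick's supremum is at most $\sigma_C(x^*)$ because $\scal{x-a}{a^*}\le 0$, and the pairs $(a,0)$ give the matching lower bound), and both exploit that $N_C(a)$ is a cone so that normal vectors can be scaled. The paper scales globally: rewriting the supremum over pairs $(c,kc^*)$ with $k\ge 0$, finiteness forces $\sup_{(c,c^*)\in\gra N_C}\big[\scal{x}{c^*}-\scal{c}{c^*}\big]\le 0$, which says exactly that $(x,0)$ is monotonically related to $\gra N_C$; Rockafellar's maximal monotonicity of $\partial \iota_C$ (Fact~\ref{SubMR}) then gives $(x,0)\in\gra N_C$, i.e.\ $x\in C$. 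You instead manufacture an explicit ray in $\gra N_C$ along which the terms diverge: proximinality of $C$ (valid since $X$ is reflexive) supplies a nearest point $\bar a$, and separating $C$ from the open ball $\inte(x+d\,B_X)$ supplies $a^*$ with $\scal{x-\bar a}{a^*}=d\|a^*\|>0$; your ``short check'' does go through, since $\scal{\bar a}{a^*}\le\sup_{c\in C}\scal{c}{a^*}\le\scal{x}{a^*}-d\|a^*\|\le\scal{\bar a}{a^*}$ forces equality throughout and hence $a^*\in N_C(\bar a)$. The paper's route is shorter and does not use reflexivity once Rockafellar's theorem is granted; yours is more elementary and self-contained, trading the maximal-monotonicity input for a nearest-point plus separation argument. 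The one point worth making explicit is that the separating functional must be nonzero (which separation of a nonempty open convex set from a disjoint convex set does guarantee), since the divergence rests on $d\|a^*\|>0$.
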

\begin{proof}
Let $(x,x^*)\in X\times X^*$. Then we have
\begin{align}
F_{N_C}(x,x^*)&=\sup_{(c,c^*)\in\gra N_C}
\left[\langle x,c^*\rangle+\langle c,x^*\rangle-\langle c,c^*\rangle\right]\nonumber\\
&=\sup_{(c,c^*)\in\gra N_C, k\geq 0}
\left[\langle x,kc^*\rangle+\langle c,x^*\rangle-\langle c,kc^*\rangle\right]\nonumber\\
&=\sup_{(c,c^*)\in\gra N_C, k\geq 0}
\left[k(\langle x,c^*\rangle-
\langle c,c^*\rangle)+\langle c,x^*\rangle\right]\label{see:01}
\end{align}
By \eqref{see:01},
\begin{align}&(x,x^*)\in\dom F_{N_C}\Rightarrow
\sup_{(c,c^*)\in\gra N_C}\left[
\langle x,c^*\rangle-\langle c,c^*\rangle\right]\leq 0\nonumber\\
 &\Leftrightarrow
\inf_{(c,c^*)\in\gra N_C}\left[
-\langle x,c^*\rangle+\langle c,c^*\rangle\right]\geq 0\nonumber\\
&\Leftrightarrow
\inf_{(c,c^*)\in\gra N_C}\left[\langle c-x,c^*-0\rangle\right]\geq 0\nonumber\\
&\Leftrightarrow (x,0)\in \gra N_C\quad\text{(by Fact~\ref{SubMR})}\nonumber\\
&\Leftrightarrow x\in C.\label{see:02}\end{align}
Now assume $x\in C$. By \eqref{see:01},
 \begin{align}F_{N_C}(x,x^*)= \iota^*_C(x^*).\label{see:05}\end{align}
  Combine \eqref{see:02} and \eqref{see:05},
   $F_{N_C}=\iota_C\oplus \iota^*_C$.
\end{proof}

\begin{fact}\emph{(See \cite[Proposition~5.5]{BWY3}.)}\label{LeSK:a1}
Let $A\colon X\To X^*$ be a monotone linear relation
such that $\gra A\neq\varnothing$ and $\gra A$ is closed. Then
\begin{align}
 F_A^*(x^*,x)=\iota_{\gra A}(x,x^*)+\langle x,x^*\rangle,\
 \forall (x,x^*)\in X\times X^* .\label{Lesk:1}
\end{align}
\end{fact}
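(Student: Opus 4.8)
The plan is to realize the Fitzpatrick function $F_A$ as a coordinate-flipped Fenchel conjugate and then read off $F_A^*$ as a biconjugate that Fenchel--Moreau collapses back to its original form. Concretely, I would introduce
\[
q\colon X\times X^*\to\RX\colon (a,a^*)\mapsto \scal{a}{a^*}+\iota_{\gra A}(a,a^*),
\]
and write $J\colon X\times X^*\to X^*\times X\colon(x,x^*)\mapsto(x^*,x)$ for the coordinate swap, using reflexivity of $X$ to identify $(X\times X^*)^*$ with $X^*\times X$. Unravelling the definition of the conjugate gives, for every $(x,x^*)$,
\[
q^*(x^*,x)=\sup_{(a,a^*)\in\gra A}\big(\scal{x}{a^*}+\scal{a}{x^*}-\scal{a}{a^*}\big)=F_A(x,x^*),
\]
that is, $F_A=q^*\circ J$. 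Conjugating once more and tracking the pairing between $X\times X^*$ and $X^*\times X$ through the two swaps, one finds $F_A^*(x^*,x)=q^{**}(x,x^*)$, so the whole statement reduces to showing $q^{**}=q$.

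First I would check the three Fenchel--Moreau hypotheses for $q$. Properness is immediate since $\gra A\neq\varnothing$ and $q(0,0)=0$; lower semicontinuity holds because $\gra A$ is closed (so $\iota_{\gra A}$ is lsc) while $\scal{\cdot}{\cdot}$ is norm-continuous on $X\times X^*$; and monotonicity forces $q\geq 0$, so $q$ never takes the value $-\infty$. The one substantive point is convexity of $q$. Since $\gra A$ is a subspace, for $(a,a^*),(b,b^*)\in\gra A$ and $t\in[0,1]$ a short expansion yields
\[
t\scal{a}{a^*}+(1-t)\scal{b}{b^*}-\scal{ta+(1-t)b}{ta^*+(1-t)b^*}=t(1-t)\scal{a-b}{a^*-b^*},
\]
and the right-hand side is $\geq 0$ precisely because $A$ is monotone and $(a-b,a^*-b^*)\in\gra A$. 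Hence $q$ is convex.

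With $q$ proper, lsc, and convex, the classical Fenchel--Moreau theorem gives $q^{**}=q$, where reflexivity of $X$ is what guarantees that the biconjugate again lives on $X\times X^*$. Combining this with the identity $F_A^*(x^*,x)=q^{**}(x,x^*)$ yields
\[
F_A^*(x^*,x)=q(x,x^*)=\iota_{\gra A}(x,x^*)+\scal{x}{x^*},
\]
which is exactly \eqref{Lesk:1}.

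The hard part will not be any single inequality but the bookkeeping of the duality pairing between $X\times X^*$ and $X^*\times X$ across the two coordinate swaps: one must verify carefully that the flip built into the Fitzpatrick function (the conjugate variable of $x$ occupies the first slot and that of $x^*$ the second) is exactly the flip $J$, so that the two swaps compose to the identity and $F_A^*(x^*,x)$ is genuinely $q^{**}(x,x^*)$ rather than a mismatched rearrangement. The mathematically essential ingredient is the convexity identity above, which is the precise place where monotonicity of the linear relation $A$ is consumed, whereas closedness of $\gra A$ is what secures the lower semicontinuity needed to invoke Fenchel--Moreau.
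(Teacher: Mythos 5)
Your proof is correct, and it is essentially the argument behind the cited source: the paper itself states this as a Fact without proof, quoting \cite[Proposition~5.5]{BWY3}, and the proof there likewise realizes $F_A$ as the conjugate (through the coordinate swap) of $q=\iota_{\gra A}+\scal{\cdot}{\cdot}$ and invokes the biconjugate theorem. Your verification of the three Fenchel--Moreau hypotheses is complete and correctly locates where each assumption is used --- the identity $t\scal{a}{a^*}+(1-t)\scal{b}{b^*}-\scal{ta+(1-t)b}{ta^*+(1-t)b^*}=t(1-t)\scal{a-b}{a^*-b^*}$ together with linearity of $\gra A$ is exactly the point where monotonicity yields convexity of $q$, and the bookkeeping $F_A^*(x^*,x)=q^{**}(x,x^*)$ through the two swaps checks out.
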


We are now ready to establish our main result.

\begin{theorem}
\label{BCPON:2}
Let $A: X\rightrightarrows X^{*}$ be a
 maximally monotone linear relation that is at most single-valued,
and let $C\neq\{0\}$ be a bounded closed and  convex subset of $X$
such that $\bigcup_{\lambda>0}\lambda\left[\dom A-C\right]$
is a closed subspace of $X$.
Let $j:\left[0,+\infty\right[\rightarrow
\left[0,+\infty\right[$ be an increasing function such that
 $j(\gamma)\geq\gamma$  for every $\gamma\in\left[0,+\infty\right[$.
Then the following hold.
\begin{enumerate}
\item\label{BCPONA:1}
$F_A$ and $F_{N_C}=\iota_C\oplus\sigma_C$ are BC-functions.

\item\label{BCPONA:3}
 $F_A^*(x^*,x)+F_{N_C}^*(y^*-x^*,x)=
 \iota_{\gra A\cap C\times X^*}(x,x^*)+\langle x, x^*\rangle+\sigma_C(y^*-x^*),
 \quad \forall (x,x^*,y^*)\in X\times X^*\times X^*$.
\item\label{BCPONA:2}
For every $(x,x^*)\in X\times X^*$,
\begin{equation}
\label{e:donner}
(F_A\Box_2 F_{N_C})^*(x^*,x) = \displaystyle \begin{cases}
\langle x,Ax\rangle + \sigma_C(x^*-Ax), &\text{if $x\in C\cap \dom
A$;}\\
\pinf, &\text{otherwise.}
\end{cases}
\end{equation}
%
\item \label{BCPONA:4}There exists $(z,z^*)\in X\times X^*$
 such that $z\in\dom A\cap C$ and
$\sigma_C(z^*-Az)>0$.
\item Assume that \label{BCPONA:04}$(z,z^*)\in X\times X^*$
 satisfies $z\in\dom A\cap C$ and
$\sigma_C(z^*-Az)>0$. Then
\begin{align}
F_A^*(x^*,z)+F_{N_C}^*(z^*-x^*,z)>(F_A\Box_2 F_{N_C})^*(x^*,z),
\quad\forall x^*\in X^*.\label{BCPONA:v0}
\end{align}
\item\label{BCPONA:5}Moreover,  assume that $X$ is a Hilbert space and $C=B_X$. Then
the implication
\begin{align}&(x,x^*)\in\pos F_A, (y,y^*)\in\pos F_{N_C}, x\neq y\ \text{and}\
\langle x-y, y^*\rangle=\|x-y\|\cdot\|y^*\|\nonumber\\
&\quad\Rightarrow\|y^*\|\leq \|x^*+y^*\|\leq j\big(\|x\|+\|x^*+y^*\|
+\|y\|+\|x-y\|\cdot\|y^*\|\big)\label{PONA:2}
\end{align}
 holds.
\end{enumerate}

\end{theorem}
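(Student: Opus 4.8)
The plan is to establish the six assertions in sequence, since each later part leans on the structural formulas proved earlier. For \eqref{BCPONA:1}, I would invoke Fact~\ref{f:Fitz}: since $A$ and $N_C$ are maximally monotone (the latter as the subdifferential of $\iota_C$, by Fact~\ref{SubMR}), their Fitzpatrick functions $F_A$ and $F_{N_C}$ are automatically BC-functions, and the identity $F_{N_C}=\iota_C\oplus\sigma_C$ is exactly Fact~\ref{f:referee04} once we note $\iota_C^*=\sigma_C$. For \eqref{BCPONA:3}, the key is that $A$ is a monotone linear relation with closed graph, so Fact~\ref{LeSK:a1} gives $F_A^*(x^*,x)=\iota_{\gra A}(x,x^*)+\langle x,x^*\rangle$; meanwhile $F_{N_C}^*(y^*-x^*,x)=(\iota_C\oplus\sigma_C)^*(x,y^*-x^*)=\sigma_C(y^*-x^*)+\iota_C(x)$ by a direct conjugate computation. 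Adding these and merging the two indicator functions $\iota_{\gra A}(x,x^*)$ and $\iota_C(x)$ into $\iota_{\gra A\cap(C\times X^*)}(x,x^*)$ yields the claimed formula.

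For \eqref{BCPONA:2}, I would apply the Simons--Z\u{a}linescu duality formula (Fact~\ref{F4}) to compute the conjugate of the partial inf-convolution. The hypothesis that $\bigcup_{\lambda>0}\lambda[\dom A-C]=\bigcup_{\lambda>0}\lambda[P_X\dom F_A-P_X\dom F_{N_C}]$ is a closed subspace is precisely the constraint qualification needed, and the properness of $F_A\Box_2 F_{N_C}$ follows from both summands being BC-functions. Fact~\ref{F4} then converts $(F_A\Box_2 F_{N_C})^*(x^*,x)$ into $\min_{u^*}[F_A^*(x^*-u^*,x)+F_{N_C}^*(u^*,x)]$, and substituting part~\eqref{BCPONA:3} (with the roles of the variables matched appropriately) collapses the minimization: the term $\iota_{\gra A\cap(C\times X^*)}(x,x^*-u^*)$ forces $x\in C\cap\dom A$ and $x^*-u^*=Ax$ (here single-valuedness of $A$ on its domain is what makes $Ax$ a well-defined point rather than a set), whereupon the expression becomes $\langle x,Ax\rangle+\sigma_C(x^*-Ax)$ on $C\cap\dom A$ and $+\infty$ elsewhere.

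Part \eqref{BCPONA:4} is the crux of the whole construction and is where I expect the main obstacle to lie. The goal is to exhibit $z\in\dom A\cap C$ with $\sigma_C(z^*-Az)>0$ for some $z^*$; since we are free to choose $z^*$, this reduces to finding $z\in\dom A\cap C$ such that $\sigma_C$ is not identically zero on $X^*\setminus\{0\}$, i.e. to showing $C\neq\{0\}$ forces $\sigma_C\not\equiv 0$. Indeed, if $C$ contains a nonzero point then by Hahn--Banach there is a functional $w^*$ with $\sigma_C(w^*)>0$, and taking any $z\in\dom A\cap C$ (nonempty because $\bigcup_{\lambda>0}\lambda[\dom A-C]$ being a subspace forces the domains to meet) together with $z^*:=Az+w^*$ does the job. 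The subtlety is ensuring $\dom A\cap C\neq\varnothing$ and that the chosen $z^*$ lies in $X^*$; both are routine given the standing hypotheses. For \eqref{BCPONA:04}, I would simply compare the formula in \eqref{BCPONA:3} against that in \eqref{BCPONA:2}: fixing such a $z$, the right-hand side of \eqref{BCPONA:3} evaluated at the relevant point equals $\langle z,z^*\rangle+\sigma_C(\cdot)$ while the left-hand side \eqref{BCPONA:2} is $\langle z,Az\rangle+\sigma_C(\cdot)$, and the strict gap $\sigma_C(z^*-Az)>0$ propagates to the strict inequality \eqref{BCPONA:v0} for every $x^*$.

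Finally, for \eqref{BCPONA:5} I would specialize to $X$ Hilbert and $C=B_X$, so $N_{B_X}$ and $F_{N_{B_X}}$ are explicit and $\pos F_{N_C}=\gra N_{B_X}$. The antecedent postulates $(y,y^*)\in\gra N_{B_X}$ with the alignment condition $\langle x-y,y^*\rangle=\|x-y\|\cdot\|y^*\|$; the first inequality $\|y^*\|\leq\|x^*+y^*\|$ I expect to follow from the skew/single-valued structure of $A$ combined with the normal-cone geometry of the unit ball (where $y^*$ is a nonnegative multiple of $y$ and $\|y\|=1$ whenever $y^*\neq 0$), and the second inequality $\|x^*+y^*\|\leq j(\cdots)$ is immediate from the hypothesis $j(\gamma)\geq\gamma$ since $\|x^*+y^*\|$ is one of the summands inside $j$. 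The implication \eqref{PONA:2} then holds, confirming that the pair $(F_A,F_{N_C})$ satisfies the hypotheses of Problem~\ref{prob1} while \eqref{BCPONA:v0} shows its conclusion fails, which is exactly the negative answer sought.
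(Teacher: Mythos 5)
Your outline for parts \ref{BCPONA:1}--\ref{BCPONA:04} matches the paper's proof: the same use of Facts~\ref{f:Fitz}, \ref{f:referee04} and \ref{LeSK:a1}, the same application of Fact~\ref{F4} for \ref{BCPONA:2} (where your claimed set equality $\bigcup_{\lambda>0}\lambda[\dom A-C]=\bigcup_{\lambda>0}\lambda[P_X\dom F_A-P_X\dom F_{N_C}]$ is not literally true --- $P_X\dom F_A$ can be strictly larger than $\dom A$ --- but the transfer of the constraint qualification is exactly what the cited \cite[Lemma~5.8]{BWY3} provides, so this is only a citation-level imprecision), and the same two-formula comparison for \ref{BCPONA:04}. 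For that last part you should make the case split explicit: for $x^*\neq Az$ the left side is $+\infty$ while the right side is finite by boundedness of $C$, and for $x^*=Az$ the gap is exactly $\sigma_C(z^*-Az)-\sigma_C(0)>0$; your phrase ``equals $\langle z,z^*\rangle+\sigma_C(\cdot)$'' is garbled but the intended comparison is the right one.

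The genuine gap is in \ref{BCPONA:5}. The inequality $\|x^*+y^*\|\leq j(\cdots)$ is indeed immediate from $j(\gamma)\geq\gamma$, but the first inequality $\|y^*\|\leq\|x^*+y^*\|$ is the only nontrivial content of this part, and you merely assert that you ``expect'' it to follow; moreover you attribute it to a ``skew'' structure of $A$, which is not among the hypotheses (only maximal monotonicity, linearity and single-valuedness are assumed). The argument the paper uses needs two ingredients you have not combined. First, when $y^*\neq 0$ the normal-cone description of $B_X$ gives $y^*=\gamma_0 y$ with $\gamma_0>0$ and $\|y\|=1$, while the alignment condition $\langle x-y,y^*\rangle=\|x-y\|\cdot\|y^*\|$ in a Hilbert space forces $y^*=\tfrac{\|y^*\|}{\|x-y\|}(x-y)$; putting these together shows $x\neq 0$ (if $x=0$ one gets $y^*=-\|y^*\|y$, contradicting $\gamma_0>0$) and hence $\tfrac{x}{\|x\|}=\tfrac{y^*}{\|y^*\|}$. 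Second, monotonicity of the linear relation $A$ gives $\langle x,x^*\rangle=\langle x,Ax\rangle\geq 0$, whence $\|x^*+y^*\|\geq\langle x^*+y^*,\tfrac{x}{\|x\|}\rangle\geq\langle y^*,\tfrac{y^*}{\|y^*\|}\rangle=\|y^*\|$. Without this computation the verification that the pair $(F_A,F_{N_C})$ actually satisfies Simons' hypothesis --- and hence that Example~\ref{ex:donner} refutes Problem~\ref{prob1} --- is incomplete.
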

\begin{proof}
\ref{BCPONA:1}:
Combine  Fact~\ref{f:referee04} and
Fact~\ref{f:Fitz}.

\ref{BCPONA:3}:
Let $(x,x^*,y^*)\in X\times X^*\times X^*$. Then by
Fact~\ref{LeSK:a1} and \ref{BCPONA:1}, we have
\begin{align*}
F_A^*(x^*,x)+F_{N_C}^*(y^*-x^*,x)&=\iota_{\gra A}(x,x^*)
+\langle x,x^*\rangle+(\iota^*_C\oplus\sigma^*_C)(y^*-x^*,x)\\
&=\iota_{\gra A}(x,x^*)+\langle x,x^*\rangle+\iota_C(x)+\sigma_C(y^*-x^*)\\
&=\iota_{\gra A\cap C\times X^*}(x,x^*)+\langle x,x^*\rangle+\sigma_C(y^*-x^*).
\end{align*}

\ref{BCPONA:2}:
By \cite[Lemma 5.8]{BWY3}, we have
\begin{align}\bigcup_{\lambda>0} \lambda
\big(P_{X}(\dom F_A)-P_{X}(\dom F_{N_C})\big)\
 \text{ is a closed subspace of $X$}.\label{Sppl:a01}\end{align}
Then for every $(x,x^*)\in X\times X^*$ and $u^*\in X^*$, by \ref{BCPONA:1},
\begin{equation*}
F_A (x,u^*)+ F_{N_C}(x,x^*-u^*)\geq \langle x,u^*\rangle+\langle
x,x^*-u^*\rangle=\langle x,x^*\rangle.
\end{equation*}
Hence
\begin{align}(F_A\Box_2F_{N_C})(x,x^*)\geq\langle x,x^*\rangle>-\infty.\label{Sppl:a1}
\end{align}

By \eqref{Sppl:a01}, \eqref{Sppl:a1}, Fact~\ref{F4},
and \ref{BCPONA:3}, for every $(x,x^*)\in X\times X^*$,
there exists $z^*\in X^*$ such that
\begin{align}
(F_A\Box_2F_{N_C})^* (x^*,x)
&=
\min_{y^*\in X^*}F^*_A ( y^*,x)+ F_{N_C}^*( x^*-y^*,x)\nonumber\\
&=\iota_{\gra A\cap C\times X^*}(x,z^*)+\langle
x,z^*\rangle+\sigma_C(x^*-z^*).
\end{align}
This implies \eqref{e:donner}.

\ref{BCPONA:4}: By the assumption, there exists $z\in\dom A\cap C$.
 Since $C\neq\{0\}$, there
exists $z^*\in X^*$ such that $\sigma_C(z^*-Az)>0$.

\ref{BCPONA:04}:
Let $x^*\in X^*$.
By the assumptions, \ref{BCPONA:2} and the boundedness of $C$, we have
\begin{align}
(F_A\Box_2 F_{N_C})^*(x^*,z)=\langle z,Az\rangle+\sigma_C( x^*-Az)<+\infty.
\label{BCPONA:v1}
\end{align}
We consider two cases.

\emph{Case 1}: $x^*\neq Az$.

Then $(z,x^*)\notin\gra A$ and so $\iota_{\gra A\cap C\times
X^*}(z,x^*)=\pinf$.
In view of  \ref{BCPONA:3} and \eqref{BCPONA:v1},  \eqref{BCPONA:v0} holds.

\emph{Case 2}: $x^*=Az$.

By \ref{BCPONA:3} and \eqref{BCPONA:v1},
we have
\begin{align*}
F_A^*(x^*,z)+F_{N_C}^*(z^*-x^*,z)=\langle z,Az\rangle+
\sigma_C(z^*-Az)&>\langle z,Az\rangle+0=\langle z,Az\rangle+\sigma_C(0)\\
&=(F_A\Box_2 F_{N_C})^*(x^*,z).
\end{align*}
Hence \eqref{BCPONA:v0} holds as well.

\ref{BCPONA:5}:
We start with a well known formula whose short proof
we include for completeness.
Let $x\in X$. Then
\begin{align}
N_{B_X}(x)&=\begin{cases}0,\;&\text{if}\; \|x\|<1;\\
\left[0,\infty\right[\cdot x,\;&\text{if}\; \|x\|=1;\\
\varnothing,\;&\text{otherwise}.\label{esee:4}\end{cases}
\end{align}
Clearly, $N_{B_X}(x)=0$ if $\|x\|<1$, and $N_{B_X}(x)=\varnothing$
 if $x\notin B_X$. Assume $\|x\|=1$.
Then
\begin{align*}
x^*\in N_{B_X}(x)&\Leftrightarrow
\|x^*\|=\|x^*\|\cdot\|x\|\geq\langle x^*,x\rangle\geq
\sup\langle x^*, B_X\rangle=\|x^*\|\\
&
\Leftrightarrow \langle x^*,x\rangle=\|x^*\|\cdot\|x\|\\
&
\Leftrightarrow x^*=\gamma x,\quad \gamma\geq0.\end{align*}
Hence \eqref{esee:4} holds.

Now let $(x,x^*)\in\pos F_A, (y,y^*)\in\pos F_{N_C}\ \text{and}\ x\neq y\ \text{be such that}\
\langle x-y, y^*\rangle=\|x-y\|\cdot\|y^*\|$. By Fact~\ref{f:Fitz},
\begin{align}
x^*=Ax \ \text{and}\ y^*\in N_{B_X}(y).\label{PONA:004}
\end{align}
Now we show that
\begin{align}
\|x^*+y^*\|\geq\|y^*\|.\label{PONA:3}
\end{align}
Clearly, \eqref{PONA:3} holds if $y^*=0$.
Thus, we assume that $y^*\neq0$. By \eqref{PONA:004} and \eqref{esee:4},
 there exists $\gamma_0>0$ such that
\begin{align}
y^*=\gamma_0 y,\label{PONA:4}
\end{align}where
\begin{align}
\|y\|=1 .\label{PONA:04}
\end{align}
Since $\langle x-y, y^*\rangle=\|x-y\|\cdot\|y^*\|$, we have
\begin{align}
y^*=\frac{\|y^*\|}{\|x-y\|}(x-y).\label{PONA:5}
\end{align}
We claim that
\begin{align}
x\neq0.\label{PONA:c4}
\end{align}
Suppose to the contrary that $x=0$.
Then by \eqref{PONA:5} and \eqref{PONA:04}, we have
$y^*=-\frac{\|y^*\|}{\|y\|}y=-\|y^*\|y$, which contradicts \eqref{PONA:4}.
Hence \eqref{PONA:c4} holds.

By
\eqref{PONA:4}, \eqref{PONA:5} and \eqref{PONA:c4}, we have
\begin{align}
\frac{x}{\|x\|}=\frac{y^*}{\|y^*\|}.\label{PONA:6}
\end{align}
Then \eqref{PONA:004} and the monotonicity of $A$ imply
\begin{align*}
\|x^*+y^*\|\geq\langle x^*+y^*, \frac{x}{\|x\|}\rangle
\geq\langle y^*, \frac{y^*}{\|y^*\|}\rangle=\|y^*\|.
\end{align*}
Therefore, \eqref{PONA:3} holds.

Then by the assumption, we have
\begin{align*}
j\big(\|x\|+\|x^*+y^*\|+\|y\|+\|x-y\|\cdot\|y^*\|\big)
&\geq j\big(\|x^*+y^*\|\big)\\
&\geq\|x^*+y^*\|\\
&\geq\|y^*\|.
\end{align*}
Hence \eqref{PONA:2} holds,
\end{proof}

We are now ready to exploit Theorem \ref{BCPON:2} to resolve Problem \ref{prob1}.

\begin{example}
\label{ex:donner}
Suppose that $X$ is a Hilbert space,
and let $A: X\rightrightarrows X^{*}$ be a
 maximally monotone linear relation that is at most single-valued,
and set $C=B_X$.
Let $j:\left[0,+\infty\right[
 \rightarrow\left[0,+\infty\right[$ be an increasing function such that
 $j(\gamma)\geq\gamma$ for every $\gamma\in\left[0,+\infty\right[$.
Then the following hold.
\begin{enumerate}
\item\label{BCREX:a1}Let $z^*\neq 0$. Then
\begin{align*}
F_{A}^*(x^*,0)+F_{N_C}^*(z^*-x^*,0)>(F_{A}
\Box_2 F_{N_{C}})^*(x^*,0),\quad\forall x^*\in X.
\end{align*}
\item
The implication
\begin{align*}&(x,x^*)\in\pos F_{A}, (y,y^*)\in
\pos F_{N_{C}}, x\neq y\ \text{and}\
\langle x-y, y^*\rangle=\|x-y\|\cdot\|y^*\|\nonumber\\
&\quad\Rightarrow\|y^*\|\leq\|x^*+y^*\|\leq
j\big(\|x\|+\|x^*+y^*\|+\|y\|+\|x-y\|\cdot\|y^*\|\big)
\end{align*}
 holds.
\end{enumerate}
\end{example}
\begin{proof}
Set $z=0$.
Then $Az=0$
$\Rightarrow$
$z^*-Az=z^*\neq 0$
$\Rightarrow$
$\sigma_C(z^*-Az)=\sigma_C(z^*)=\|z^*\|>0$.
Now apply Theorem~\ref{BCPON:2}\ref{BCPONA:04}\&\ref{BCPONA:5}.
\end{proof}

\begin{remark}
Example~\ref{ex:donner} yields
a negative answer to Simons' Problem \ref{prob1}
(\cite[Problem~34.7]{Si2}) for many linear relations --- including the rotation by 90 degrees in the plane.
\end{remark}

\section{Resolution of Problem \ref{prob2}}\label{sec:4}

We now move to the second problem. Its resolution depends on the following
fact concerning a maximally monotone operator on $\ell^2$,
the real Hilbert space of square-summable sequences.

\begin{fact}{\rm (See \cite[Propositions~3.5, 3.6 and 3.7 and Lemma~3.18]{BWY7}.)}
\label{FE:1}
Suppose that  $X=\ell^2$,  and that
$A:\ell^2\rightrightarrows \ell^2$ is given by
\begin{align}Ax:=\frac{\bigg(\sum_{i< n}x_{i}-\sum_{i> n}x_{i}\bigg)_{n\in\NN}}{2}
=\bigg(\sum_{i< n}x_{i}+\tfrac{1}{2}x_n\bigg)_{n\in\NN},
\quad \forall x=(x_n)_{n\in\NN}\in\dom A,\label{EL:1}\end{align}
where $\dom A:=\Big\{ x:=(x_n)_{n\in\NN}\in \ell^{2}\mid \sum_{i\geq 1}x_{i}=0,
 \bigg(\sum_{i\leq n}x_{i}\bigg)_{n\in\NN}\in\ell^2\Big\}$ and $\sum_{i<1}x_{i}:=0$.
Then
\begin{align}
\label{PF:a2}
A^*x= \bigg(\thalb x_n + \sum_{i> n}x_{i}\bigg)_{n\in\NN},
\end{align}
where
\begin{equation*}
x=(x_n)_{n\in\NN}\in\dom A^*=\bigg\{ x=(x_n)_{n\in\NN}\in \ell^{2}\;\; \bigg|\;\;
 \bigg(\sum_{i> n}x_{i}\bigg)_{n\in\NN}\in \ell^{2}\bigg\}.
\end{equation*}
Then $A$ provides an at most single-valued linear relation
such that the following hold.
\begin{enumerate}
 \item
 $A$\label{NEC:1} is maximally monotone and skew.
\item\label{NEC:2} $A^*$ is maximally monotone but not skew.
\item\label{NEC:4} $F_{A^*}^{*}(x^*,x)=F_{A^*}(x,x^*)
=\iota_{\gra A^*}(x,x^*)+\scal{x}{x^*},\quad \forall(x,x^*)\in X\times X$.
\item\label{NEC:5} $\langle A^*x, x\rangle=\tfrac{1}{2}s^2,
\quad \forall x=(x_n)_{n\in\NN}\in\dom A^*\ \text{with}\quad
s:=\sum_{i\geq1} x_i$.
\end{enumerate}
\end{fact}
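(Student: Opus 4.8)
The plan is to derive every assertion from two elementary Abel-summation identities together with the linear-relation calculus and the Fitzpatrick facts already recorded. The starting point is that for $x=(x_n)_{n\in\NN}$ in the relevant domain,
\begin{align*}
\langle x,Ax\rangle=\sum_{n\in\NN}x_n\Big(\sum_{i<n}x_i+\tfrac12 x_n\Big)
=\tfrac12\Big(\sum_{i\in\NN}x_i\Big)^2=\tfrac12 s^2 ,
\end{align*}
and, since the double sum $\sum_{i<n}x_ix_n$ is symmetric under reversing the roles of $A$ and $A^*$, the same value $\tfrac12 s^2$ for $\langle x,A^*x\rangle$. This single computation already yields \ref{NEC:5}, shows that $A$ is skew (on $\dom A$ the constraint $\sum_i x_i=0$ forces $\langle x,Ax\rangle=0$), and shows that $A^*$ is monotone but not skew, since $\tfrac12 s^2\ge 0$ with strict inequality for any finitely supported $x\in\dom A^*$ with $s\neq 0$.

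Next I would confirm the adjoint formula \eqref{PF:a2} directly from the defining relation $\langle Ax,y\rangle=\langle x,A^*y\rangle$ for all $x\in\dom A$: both sides equal $\tfrac12\sum_n x_ny_n+\sum_{i<n}x_iy_n$ by Fubini, and the stated domain is exactly the set of $y$ for which the candidate sequence lies in $\ell^2$; since $\gra A$ is a closed subspace this determines $A^*$. For the maximality statements \ref{NEC:1} and \ref{NEC:2} I would invoke the standard characterization that a monotone linear relation with closed graph in a reflexive space is maximally monotone exactly when its adjoint is monotone. Applying this to $A$, whose adjoint $A^*$ is monotone by the identity above, gives that $A$ is maximally monotone; applying it to $A^*$, whose adjoint $A^{**}=A$ is monotone, gives that $A^*$ is maximally monotone. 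The failure of skewness of $A^*$ was already observed.

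Finally, for \ref{NEC:4} the first equality $F_{A^*}^*(x^*,x)=\iota_{\gra A^*}(x,x^*)+\langle x,x^*\rangle$ is immediate from Fact~\ref{LeSK:a1}, applied to the monotone closed linear relation $A^*$. The substance is the second equality $F_{A^*}(x,x^*)=\iota_{\gra A^*}(x,x^*)+\langle x,x^*\rangle$. One inequality is free: the Fitzpatrick function is the smallest representative, whence $F_{A^*}\le\iota_{\gra A^*}+\langle\cdot,\cdot\rangle$, and on $\gra A^*$ the bound $F_{A^*}\ge\langle\cdot,\cdot\rangle$ together with $\pos F_{A^*}=\gra A^*$ (Fact~\ref{f:Fitz}) forces equality. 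The main obstacle is to prove $F_{A^*}(x,x^*)=+\infty$ whenever $(x,x^*)\notin\gra A^*$, which is false for a generic maximally monotone linear relation and so must exploit the arithmetic of $A$. Writing
\begin{align*}
F_{A^*}(x,x^*)=\sup_{a\in\dom A^*}\Big[\langle x,A^*a\rangle+\langle a,x^*\rangle-\tfrac12 s_a^2\Big],
\qquad s_a:=\sum_i a_i ,
\end{align*}
the decisive structural point is that the quadratic penalty $\tfrac12 s_a^2$ sees only the single, non-$\ell^2$ direction $\mathbf 1=(1,1,\dots)$, so that (since $(A+A^*)x=s\,\mathbf 1\notin\ell^2$ unless $s=0$) it vanishes on the subspace $\{a\in\dom A^*:s_a=0\}$, which contains every finitely supported sum-zero sequence and is therefore dense in $\ell^2$. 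Consequently, finiteness of the supremum forces the penalty-free linear functional $a\mapsto\langle x,A^*a\rangle+\langle a,x^*\rangle$ to be bounded above on this subspace, hence to vanish there. Feeding the explicit formula for $A^*$ into this vanishing condition, and then treating the one remaining parameter $s_a$, is exactly what pins $(x,x^*)$ into $\gra A^*$ and simultaneously evaluates the supremum as $\langle x,x^*\rangle$. I expect this density-of-the-degenerate-directions step to be the crux of the proof.
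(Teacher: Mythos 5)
The paper never proves this statement: it is imported as a Fact from \cite[Propositions~3.5, 3.6, 3.7 and Lemma~3.18]{BWY7}, so there is no in-paper argument to compare against. Judged on its own merits, your outline follows the same strategy as the cited source --- Abel summation to evaluate the quadratic form $\langle x,A^*x\rangle=\tfrac12 s^2$, the Br\'{e}zis--Browder criterion (a closed monotone linear relation in a reflexive space is maximally monotone if and only if its adjoint is monotone) for \ref{NEC:1} and \ref{NEC:2}, and Fact~\ref{LeSK:a1} plus a direct evaluation of the supremum for \ref{NEC:4} --- and the crux you identify is the right one. Your reduction does work: writing $a=a_0+s_a e_1$ with $a_0\in D_0:=\{a\in\dom A^*\mid s_a=0\}$, the supremum defining $F_{A^*}(x,x^*)$ splits into a linear functional over the subspace $D_0$ and a one-dimensional concave quadratic in $s_a$, and finiteness forces the linear part to vanish on $D_0$.

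Two places are thinner than they should be. First, the adjoint computation: verifying $\langle Ax,y\rangle=\langle x,A^*y\rangle$ for $y$ in the stated set only proves that the candidate relation is \emph{contained} in $A^*$; the reverse inclusion $\dom A^*\subseteq\{y\mid (\sum_{i>n}y_i)_{n\in\NN}\in\ell^2\}$ requires testing against suitable elements of $\dom A$ and is asserted rather than argued (likewise the closedness of $\gra A$, which both your Br\'{e}zis--Browder step and the identity $A^{**}=A$ need). Second, the ``density of the degenerate directions'' is a red herring: $A^*$ is discontinuous, so you cannot propagate the vanishing condition from a dense subspace by continuity. What actually closes the argument is the algebraic identification $D_0=\dom A$ (for $a\in\dom A^*$ with $s_a=0$ one has $\sum_{i\leq n}a_i=-\sum_{i>n}a_i$, so the partial sums lie in $\ell^2$) together with $A^*|_{D_0}=-A$; with these, the vanishing condition $\langle x,A^*a\rangle+\langle a,x^*\rangle=0$ for all $a\in D_0$ reads $\langle x^*,a\rangle=\langle x,Aa\rangle$ for all $a\in\dom A$, which is literally the definition of $(x,x^*)\in\gra A^*$, and the remaining one-parameter supremum evaluates to $\tfrac12 s_x^2=\langle x,x^*\rangle$ via the polarization identity $\langle x,A^*e_1\rangle+\langle e_1,A^*x\rangle=s_x$. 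Make those two steps explicit and your outline becomes a complete proof.
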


We are now ready for the main construction of this section.

\begin{example}\label{FCPEX:3}
Suppose that $X$ and $A$ are as in Fact~\ref{FE:1}.
Set $e_1:=(1,0,\ldots,0,\ldots)$, i.e., there is a $1$
 in the first place and all  others entries are $0$, and $C:=\left[0,e_1\right]$.
 Let $j:\left[0,+\infty\right[
 \rightarrow\left[0,+\infty\right[$ be an increasing function such that
 $j(\gamma)\geq\tfrac{\gamma}{2}$ for every $\gamma\in\left[0,+\infty\right[$.
Then the following hold.
\begin{enumerate}
\item\label{BCPONA:E01} $F_{A^*}$ and $F_{N_C}=\iota_C\oplus\sigma_C$ are BC--functions.
\item\label{BCPONA:E1}
$(F_{A^*}\Box_2 F_{N_C})(x,x^*)=
\begin{cases}
\langle x,A^*x\rangle + \sigma_C(x^*-A^*x),&\text{if $x\in C$;}\\
\pinf, &\text{otherwise,}
\end{cases}
\quad \forall (x,x^*)\in X\times X^*$.
\item\label{BCREX:E2} Then
\begin{align*}
F^*_{A^*}(x^*,0)+F_{N_{C}}^*(A^*e_1-x^*,0)>
(F_{A^*}\Box_2 F_{N_{C}})^*(A^*e_1,0),\quad\forall x^*\in X.
\end{align*}
\item\label{BCREX:E3}
The implication
\begin{align*}&(x,x^*)\in\pos F_{N_{C}} , (y,y^*)\in\pos F_{A^*}, x\neq y\ \text{and}\
\langle x-y, y^*\rangle=\|x-y\|\cdot\|y^*\|\nonumber\\
&\quad\Rightarrow\|y^*\|\leq\tfrac{1}{2}\|y\|\leq
j\big(\|x\|+\|x^*+y^*\|+\|y\|+\|x-y\|\cdot\|y^*\|\big)
\end{align*}
 holds.
\item\label{BCREX:E4} $A^*+N_C$ is maximally monotone.

\end{enumerate}
\end{example}

\begin{proof}
\ref{BCPONA:E01}: Combine Fact~\ref{FE:1}\ref{NEC:2},
 Fact~\ref{f:Fitz} and Fact~\ref{f:referee04}.

\ref{BCPONA:E1}: Using Fact~\ref{FE:1}\ref{NEC:4},
we see that for every $(x,x^*)\in X\times X^*$,
\begin{align*}
(F_{A^*}\Box_2 F_{N_C})(x,x^*) &=
\inf_{y^*\in X^*}\iota_{\gra A^*}(x,y^*) +
\langle x,y^*\rangle + \iota_C(x) + \sigma_C(x^*-y^*)\\
&=\begin{cases}
\langle x,A^*x\rangle + \sigma_C(x^*-A^*x),&\text{if $x\in \dom A^*\cap C$;}\\
\pinf, &\text{otherwise,}
\end{cases}.
\end{align*}
The identity now follows since $C\subseteq\dom A^*$.

\ref{BCREX:E2}:  Let $x^*\in X$. Then by Fact~\ref{FE:1}\ref{NEC:4} we have
\begin{align}
F^*_{A^*}(x^*,0)+F_{N_{C}}^*(A^*e_1-x^*,0)
&=\iota_{\{0\}}(x^*) + \sigma_C(A^*e_1-x^*)\nonumber\\
&=\sigma_C(A^*e_1)+\iota_{\{0\}}(x^*)\nonumber\\
&=\sup_{t\in\left[0,1\right]}\big\{t\langle e_1, A^*e_1\rangle\big\}+
\iota_{\{0\}}(x^*)\nonumber\\
&=\langle e_1, A^*e_1\rangle+ \iota_{\{0\}}(x^*)\nonumber\\
&=\tfrac{1}{2}+\iota_{\{0\}}(x^*)\quad\text{
(by Fact~\ref{FE:1}\ref{NEC:5})}.\label{BCNC:1}\end{align}
On the other hand, by \ref{BCPONA:E1} and $C\subseteq\dom A^*$ by Fact~\ref{FE:1}, we have
\begin{align}
(F_{A^*}\Box_2 F_{N_{C}})^*(A^*e_1,0)
&=\sup_{x\in C, x^*\in X}\big\{\langle A^*e_1, x\rangle-
\langle x, A^*x\rangle-\sigma_C(x^*-A^*x)\big\}\nonumber\\
&\leq\sup_{x\in C, x^*\in X}\big\{\langle A^*e_1, x\rangle-
\langle x, A^*x\rangle\big\}\quad\text{(by $0\in C$)}\nonumber\\
&=\sup_{t\in\left[0,1\right]}\big\{t\langle A^*e_1,
 e_1\rangle-t^2\langle e_1, A^*e_1\rangle\big\}\nonumber\\
&=\tfrac{1}{4}\langle A^*e_1, e_1\rangle\nonumber\\
&=\tfrac{1}{8}\quad\text{(by Fact~\ref{FE:1}\ref{NEC:5})}\nonumber\\
&<F^*_{A^*}(x^*,0)+F_{N_{C}}^*(A^*e_1-x^*,0)\quad\text{(by \eqref{BCNC:1})}.\nonumber
\end{align}
Hence \ref{BCREX:E2} holds.

\ref{BCREX:E3}:
Let $(x,x^*)\in\pos F_{N_{C}} , (y,y^*)\in\pos F_{A^*},\
 \text{and}\ x\neq y\ \text{be such that}\
\langle x-y, y^*\rangle=\|x-y\|\cdot\|y^*\|$. By Fact~\ref{f:Fitz},
\begin{align}
x^*\in N_{C}(x)\ \text{and}\ y^*=A^*y .\label{PONCN:004}
\end{align}
Now we show
\begin{align}
\tfrac{1}{2}\|y\|\geq\|y^*\|.\label{PONACT:3}
\end{align}
Clearly, \eqref{PONACT:3} holds if $y^*=0$. Now assume that $y^*\neq0$. Then
by $\langle x-y, y^*\rangle=\|x-y\|\cdot\|y^*\|$ and
 $x\in C$, there exist $t_0\geq0$ and  $\gamma_0>0$ such that
\begin{align}
x=t_0 e_1 \ \text{and}\
y^*=\gamma_0(t_0e_1-y).\label{PONACT:4}
\end{align}
Write $y=(y_n)_{n\in\NN}$. By \eqref{PF:a2} and \eqref{PONACT:4},
we have
\begin{align}
 \sum_{i> n}y_{i}=-\gamma_0 y_n-\tfrac{1}{2}y_n,\quad \forall n\geq 2.\label{PONACT:5}
\end{align}
Thus
\begin{align}
 \sum_{i> n+1}y_{i}=-\gamma_0 y_{n+1}-\tfrac{1}{2}y_{n+1},
 \quad \forall n\geq 1.\label{PONACT:6}
\end{align}
Subtracting \eqref{PONACT:6} from \eqref{PONACT:5}, we obtain
\begin{align}
y_{n+1}=(-\gamma_0 -\tfrac{1}{2})(y_n-y_{n+1}),\quad \forall n\geq 2.\label{PONACT:7}
\end{align}
Since $\gamma_0>0$,  by \eqref{PONACT:7}, we have
\begin{align}
y_{n+1}\frac{\gamma_0 -\tfrac{1}{2}}{\gamma_0+\tfrac{1}{2}}=y_n,
\quad \forall n\geq 2.\label{PONACT:8}
\end{align}
Now we claim that
\begin{align}
y_n=0,\quad \forall n\geq2.\label{PONACT:9}
\end{align}
Suppose to the contrary that there exists $i_0\geq2$ such that
\begin{align}y_{i_0}\neq0.\label{PONACT:09}
\end{align}
Then by \eqref{PONACT:8}, we have
$y_{i_0}=y_{i_0+1}\frac{\gamma_0 -\tfrac{1}{2}}{\gamma_0+\tfrac{1}{2}}$.
Thus,
\begin{align}
\gamma_0\neq\frac{1}{2}.\label{PONACT:10}
\end{align}
Then by \eqref{PONACT:8}, we have
\begin{align}
y_{n+1}=\frac{\gamma_0+\tfrac{1}{2}}{\gamma_0 -\tfrac{1}{2}}y_n,
\quad \forall n\geq 2.\label{PONACT:08}
\end{align}
Set $\alpha:=\frac{\gamma_0+\tfrac{1}{2}}{\gamma_0 -\tfrac{1}{2}}$. Then  by $\gamma_0>0$ again,
\begin{align}
|\alpha|>1.\label{PONACT:11}
\end{align}
By \eqref{PONACT:08} and Fact~\ref{FE:1}, we have
$\sum_{i> 2}y_i=y_2\sum_{i\geq1}\alpha^i$ and
the former series is convergent.
Thus \eqref{PONACT:11} implies that
$y_2=0$ and then $y_n=0,\forall n>2$ by \eqref{PONACT:08},
 which contradicts \eqref{PONACT:09}.
Hence \eqref{PONACT:9} holds. Then by Fact~\ref{FE:1},
\begin{align}
y^*=(\tfrac{1}{2}y_1,0,0,\ldots,0,\ldots).
\end{align}
Hence $\|y^*\|\leq\tfrac{1}{2}\|y\|$ and thus \eqref{PONACT:3} holds.
Then by the assumption, we have
\begin{align*}
\|y^*\|&\leq\tfrac{1}{2}\|y\|\leq\tfrac{1}{2}\big(\|x\|+\|y\|
+\|x^*+y^*\|+\|x-y\|\cdot\|y^*\|\big)\\
&\leq j(\|x\|+\|y\|+\|x^*+y^*\|+\|x-y\|\cdot\|y^*\|).
\end{align*}
Hence the implication \ref{BCREX:E3} holds.

\ref{BCREX:E4}: By Fact~\ref{f:Fitz} and Fact~\ref{FE:1}\ref{NEC:2},
 $\pos F_{A^*}=\gra A^*$ and $\pos F_{N_C}=\gra N_C$.
Then directly apply \ref{BCPONA:E01}\&\ref{BCREX:E3} and
Fact~\ref{BCPCon}.
\end{proof}

\begin{remark}
\label{r:muchharder}
Example~\ref{FCPEX:3} provides  a negative answer to  Problem \ref{prob2} as asserted.
\end{remark}

\begin{remark}
\label{r:final}
It is not as easy to find a counterexample to Problem~\ref{prob2} as it is for Problem~\ref{prob1}. Indeed,   Fact~\ref{BCPCon} and Fact~\ref{F4} imply that, to find a counterexample,
we need to start with two maximally monotone operators $A,B:X\rightrightarrows X^*$  such that $A+B$
 is maximally monotone but it does not satisfy the well known sufficient transversality
  condition for the maximal monotonicity of the sum operator
   in a reflexive space \cite[Lemma 5.1]{SiZ} and \cite[Lemma~5.8]{BWY7}, that is:
 \begin{align}\label{cq}\bigcup_{\lambda>0}\lambda\left[\dom A-\dom B\right]\ \text{ is a closed subspace of $X$}.
 \end{align}
Otherwise, \eqref{cq} ensures that \eqref{Probcon} in Problem~\ref{prob2} holds by Fact~\ref{F4} and \cite[Lemma~5.8]{BWY7}.

Finally, as we mentioned in Section~\ref{sec:bc}  an affirmative answer to Problem \ref{prob2}
 would rederive Simons' theorem
(Fact~\ref{BCPCon}). Indeed, Simons  \cite[Corollary~34.5]{Si2} shows in  detail
 how to deduce the classic result of Br\'{e}zis, Crandall and Pazy \cite{BreCA} from his result.
\end{remark}

 \vfill

\paragraph{Acknowledgments.} Heinz Bauschke was partially supported by the
Natural Sciences and Engineering Research Council of Canada and by
the Canada Research Chair Program. Jonathan  Borwein was partially
supported by the Australian Research  Council. Xianfu Wang was
partially supported by the Natural Sciences and Engineering Research
Council of Canada.

\end{document}